\newtheorem{proposition}{Proposition}
\newtheorem{lemma}{Lemma}[section]
\newtheorem{theorem}{Theorem}
\newtheorem{corollary}{Corollary}
\theoremstyle{definition}
\theoremstyle{definition}
\theoremstyle{definition}
\newcommand{\R}{\mathbb{R}}
\begin{document}

\title[Stability of $C^\infty$ convex integrands]
{Stability of $C^\infty$ convex integrands}
\thanks{\color{black} This work is partially supported by JSPS and CAPES under the Japan--Brazil research cooperative program and 
JSPS KAKENHI Grant Number 26610035.}
\author[E.B.~Batista]{Erica Boizan Batista}
\address{
Research Institute of Environment and Information Sciences,  
Yokohama National University, 
Yokohama 240-8501, Japan}
\email{ericabbatista@gmail.com}
\author[H.~Han]{Huhe Han}
\address{Graduate School of Environment and Information Sciences,Yokohama National University, {Yokohama 240-8501,} Japan}
\email{han-huhe-bx@ynu.jp}
\author[T.~Nishimura]{Takashi Nishimura
}
\address{
Research Institute of Environment and Information Sciences,  
Yokohama National University, 
Yokohama 240-8501, Japan}
\email{nishimura-takashi-yx@ynu.jp}


\begin{abstract}
In this paper, it is shown that 
the set consisting of stable convex integrands $S^n\to \mathbb{R}_+$ is 
open and dense in the set consisting of $C^\infty$ convex integrands 
with respect to Whitney $C^\infty$ topology.    
Moreover, an application of the proof of this result is also shown.   
 \end{abstract}

\subjclass[2010]{58K05, 58K25}
\keywords{Convex integrand, 
$C^\infty$ convex integrand, Stable convex integrand, 
Caustic, Symmetry set, Wave front, Index of critical point, Morse inequalities}

\date{}

\maketitle

\section{Introduction}
In the celebrated series 
\cite{mather1, mather2, mather3, mather4, mather5, mather6}, 
J.~Mather gave a complete answer to the problem on density 
of proper stable mappings in a surprizing form.      
For proper $C^\infty$ mappings of special type, 
it is natural to ask the similar question, namely to ask  
\lq\lq Are generic proper mappings of special type stable ?\:\rq\rq.              
Such investigations, for instance, can be found in \cite{generic projections} 
for generic projections of submanifolds, 
in \cite{bruce-kirk} for generic projections of stable mappings 
and  in \cite{ichiki-nishimura1, 
ichiki-nishimura2, ichiki-nishimura3, ichiki-nishimura-oset-ruas} 
for generic distance-squared mappings and their generalizations.      
\par 
Motivated by these researches, in this paper, it is investigated 
the density problem for 
$C^\infty$ convex integrands.   The notion of convex integrand 
was firstly introduced in \cite{taylor}, which is defined as follows.    
For a positive integer $n$, let $S^n$ be the unit sphere of 
$\mathbb{R}^{n+1}$.   The set consisting of positive real numbers is denoted by $\mathbb{R}_+$.     Then, a continuous function 
$\gamma: S^n\to \mathbb{R}_+$  is called a {\it convex integrand} if 
the boundary of the convex hull of 
 inv(graph$(\gamma)$) is exactly the same set as inv(graph$(\gamma)$),  
where graph$(\gamma)$ is the set 
$\left\{(\theta, \gamma(\theta))\; \left|\; \theta\in S^n\right.\right\}$ 
with respect to the polar plot expression for $\mathbb{R}^{n+1}-\{0\}$ 
and inv$: \mathbb{R}^{n+1}-\{0\}\to \mathbb{R}^{n+1}-\{0\}$ is the inversion 
defined by inv$(\theta, r)=\left(-\theta, \frac{1}{r}\right)$.     
The notion of convex integrand is closely related with the notion of 
Wulff shape, which was firstly introduced in \cite{wulff} as a geometric model of crystal at equilibrium.      Integration of a convex integrand $\gamma$ 
over $S^n$ represents the surface energy of the Wulff shape associated 
with $\gamma$.    Hence, $\gamma$ is called a convex {\it integrand}.   
For more details on convex integrands, see for instance 
\cite{morgan, taylor}.   
\par 
Set 
\[
C^\infty_{\rm conv}(S^n, \mathbb{R}_+)
=\left\{\gamma\in C^\infty(S^n, \mathbb{R}_+)\; |\; 
\gamma \mbox{ is a convex integrand}\right\}, 
\]   
where $C^\infty(S^n, \mathbb{R}_+)$ is the set consisting of 
$C^\infty$ functions $S^n\to \mathbb{R}_+$.    
The set $C^\infty(S^n, \mathbb{R}_+)$ is endowed 
with {\it Whitney $C^\infty$ topology} 
(for details on Whitney $C^\infty$ topology, 
see for instance \cite{arnold-guseinzade-varchenko, g-g}); 
and the set $C^\infty_{\rm conv}(S^n, \mathbb{R}_+)$ 
is a topological subspace of $C^\infty(S^n, \mathbb{R}_+)$.    
Two $C^\infty$ functions $\gamma_1, \gamma_2: S^n\to \mathbb{R}_+$ are 
said to be $\mathcal{A}$-{\it equivalent} if there exist $C^\infty$ 
diffeomorphisms $h: S^n\to S^n$ and $H: \mathbb{R}_+\to \mathbb{R}_+$ 
such that the equality $\gamma_2=H\circ \gamma_1\circ h^{-1}$ holds.    
A $C^\infty$ function $\gamma\in C^\infty(S^n, \mathbb{R}_+)$ is 
said to be {\it stable} 
if the $\mathcal{A}$-equivalence class $\mathcal{A}(\gamma)$ is 
{\color{black}an} open subset 
of the topological space $C^\infty(S^n, \mathbb{R}_+)$.     
By definition, any function $\mathcal{A}$-equivalent to a stable function 
is stable.    
Set  
\[
S^\infty(S^n, \mathbb{R}_+)= 
\left\{\gamma\in C^\infty(S^n, \mathbb{R}_+)\; |\; \gamma  
\mbox{ is stable}\right\}.    
\] 
By definition, $S^\infty(S^n, \mathbb{R}_+)$ is open.    
The following proposition is one of corollaries of Mather's series \cite{mather1, mather2, mather3, mather4, mather5, mather6}.  
\begin{proposition}\label{proposition 1}
\begin{enumerate}
\item[(1)]\quad A $C^\infty$ function $\gamma\in C^\infty(S^n, \mathbb{R}_+)$ is stable if and only if 
 all critical points of $\gamma$ are non-degenerate and  
$\gamma(\theta_1)\ne \gamma(\theta_2)$ 
holds 
for any two distinct critical points 
$\theta_1, \theta_2\in S^n$.  
\item[(2)]\quad The open subset $S^\infty(S^n, \mathbb{R}_+)$ is dense 
in $C^\infty(S^n, \mathbb{R}_+)$. 
\end{enumerate}  
\end{proposition}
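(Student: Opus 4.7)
The plan is to view Proposition~\ref{proposition 1} as the one-dimensional-target case of Mather's characterization of $\mathcal{A}$-stability. Since $S^n$ is compact, Mather's equivalence of stability with infinitesimal stability applies, and the problem reduces to analysing $C^\infty$ maps $\gamma:S^n\to\mathbb{R}_+$ up to changes of coordinates on source and target; the convex integrand condition plays no role here and can be ignored.

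For part~(1), the substantive direction is $(\Leftarrow)$. Assume $\gamma$ is Morse with pairwise distinct critical values $\gamma(\theta_1),\ldots,\gamma(\theta_k)$, and let $\gamma'$ be $C^\infty$-close to $\gamma$. The implicit function theorem applied to $d\gamma$ produces perturbed critical points $\theta_i'$ near $\theta_i$, still non-degenerate, still pairwise distinct in value, and of the same Morse index. The Morse lemma with parameters supplies $C^\infty$ charts in which $\gamma$ and $\gamma'$ take a common quadratic normal form $\gamma(\theta_i)+Q_i$ and $\gamma'(\theta_i')+Q_i$ near $\theta_i$ and $\theta_i'$. Patching these charts into a global diffeomorphism $h:S^n\to S^n$ (by isotoping the chart transitions across the complement of the critical set, where $\gamma$ is a submersion and the level-set foliation carries $\gamma'$ to $\gamma$) and composing with a target diffeomorphism $H:\mathbb{R}_+\to\mathbb{R}_+$ sending $\gamma'(\theta_i')$ to $\gamma(\theta_i)$ realises $\gamma=H\circ\gamma'\circ h^{-1}$, proving stability. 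For $(\Rightarrow)$, a degenerate critical point can be split by an arbitrarily small perturbation into several non-degenerate ones, changing the cardinality of the critical set; and coincident critical values $\gamma(\theta_1)=\gamma(\theta_2)$ can be separated by a small bump supported near $\theta_2$. In either case the perturbed function cannot be $\mathcal{A}$-equivalent to the original, since $\mathcal{A}$-equivalence preserves both the number of critical points and the coincidence pattern of critical values.

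For part~(2), density of $S^\infty(S^n,\mathbb{R}_+)$ follows from two classical perturbation steps. First, by Sard's theorem applied to the $2$-jet extension of $\gamma$, Morse functions are dense in $C^\infty(S^n,\mathbb{R})$ with respect to the Whitney $C^\infty$ topology; composing with the diffeomorphism $\mathbb{R}\to\mathbb{R}_+$, $t\mapsto e^t$, transfers density to $C^\infty(S^n,\mathbb{R}_+)$. Second, given a Morse $\gamma$ its critical set is finite, and small constants can be added via a partition of unity supported on disjoint neighbourhoods of the critical points without destroying non-degeneracy; arbitrarily small such perturbations separate the critical values. The main obstacle is the globalisation step in the $(\Leftarrow)$ direction of part~(1): namely, turning the local normal forms at each $\theta_i$ into a single pair $(h,H)$ valid on all of $S^n$ at once. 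This is precisely where distinctness of critical values is indispensable, because $H$ must act consistently on every critical value simultaneously; if two coincided, independent perturbations of the two critical levels would produce incompatible demands on $H$ and destroy $\mathcal{A}$-equivalence.
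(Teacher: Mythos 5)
The paper offers no proof of this proposition at all: it records it as a corollary of Mather's series, i.e.\ it is obtained from the general machinery of multi-transversality and the theorem that infinitesimal stability implies stability. Your proposal instead gives the classical direct argument special to target dimension one --- Morse lemma with parameters, patching along level sets, Sard's theorem for density --- which is a genuinely different and more elementary route, and it is essentially correct; what it buys is a self-contained proof that does not invoke Mather's general theory, at the cost of having to do the globalisation by hand. Two points deserve care. First, in the $(\Rightarrow)$ direction your claim that a degenerate critical point forces instability because splitting it ``changes the cardinality of the critical set'' is not sound as stated: for $x^4$ the perturbation $x^4+\varepsilon x$ still has exactly one critical point, and if the critical set is infinite the count is useless. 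The invariant that actually works is non-degeneracy itself: since $H'$ never vanishes and $h$ is a diffeomorphism, $\mathcal{A}$-equivalence carries degenerate critical points to degenerate critical points, so $\mathcal{A}(\gamma)$ misses the dense set of Morse functions with distinct critical values and hence cannot be open. Second, the globalisation of the local normal forms into a single pair $(h,H)$ is the real content of the $(\Leftarrow)$ direction; your sketch (carrying $\gamma'$ to $\gamma$ along the level-set foliation off the critical set) names the right mechanism but is only a sketch --- the standard clean implementations are either Mather's homotopy method (solving the infinitesimal equation along a path from $\gamma$ to $\gamma'$) or an explicit vector-field integration argument. With those two repairs your outline is a complete proof of both parts.
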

\noindent  
The assertion (2) of Proposition \ref{proposition 1} asserts that any $C^\infty$ function 
$\gamma: S^n\to \mathbb{R}_+$ can be perturbed to a stable function 
$\widetilde{\gamma}$ by a sufficiently small perturbation;  
and for any sufficiently small $\varepsilon >0$, any continuous mapping 
$\Phi: (-\varepsilon, \varepsilon)\to C^\infty(S^n, \mathbb{R}_+)$ 
such that $\Phi(0)=\widetilde{\gamma}$ and any two $t_1, t_2\in (-\varepsilon, \varepsilon)$, 
there exist $C^\infty$ diffeomorphisms 
$h: S^n\to S^n$ and $H: \mathbb{R}_+\to \mathbb{R}_+$ such that 
the equality $\Phi(t_2)=H\circ \Phi(t_1)\circ h^{-1}$ holds.   
\par 
\smallskip 
The main purpose of this paper is to show the following:   
\begin{theorem}
\label{theorem 1}
The open subset  
$S^\infty(S^n, \mathbb{R}_+)
\cap C^\infty_{\rm conv}(S^n, \mathbb{R}_+)$ 
is dense in $C^\infty_{\rm conv}(S^n, \mathbb{R}_+)$. 
\end{theorem}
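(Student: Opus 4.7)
The plan is to exploit the standard identification of convex integrands with (restrictions to $S^n$ of) support functions of convex bodies in $\mathbb{R}^{n+1}$. Denoting by $\nabla^2\gamma$ the spherical Hessian and by $g$ the round metric on $S^n$, a short computation using $1$-homogeneity of the natural positively $1$-homogeneous extension of $\gamma$ to $\mathbb{R}^{n+1}\setminus\{0\}$ shows that $\gamma\in C^\infty(S^n,\mathbb{R}_+)$ is a convex integrand if and only if the symmetric $(0,2)$-tensor $\nabla^2\gamma+\gamma\,g$ is positive semi-definite at every point of $S^n$. Accordingly, I would introduce the auxiliary subset
\[
C^\infty_{\mathrm{sc}}(S^n,\mathbb{R}_+)=\{\gamma\in C^\infty_{\mathrm{conv}}(S^n,\mathbb{R}_+)\;|\;\nabla^2\gamma+\gamma\,g>0\text{ on }S^n\},
\]
consisting of those convex integrands whose Wulff shapes are smooth and of everywhere positive Gauss curvature. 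The strategy is to establish two easy properties of this subset and then combine them with Proposition~\ref{proposition 1}(2).

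First, $C^\infty_{\mathrm{sc}}(S^n,\mathbb{R}_+)$ is \emph{open} in the ambient space $C^\infty(S^n,\mathbb{R}_+)$: since $S^n$ is compact, strict positive definiteness of $\nabla^2\gamma+\gamma\,g$ is a $C^2$-open condition, so any function sufficiently $C^2$-close to a $\gamma\in C^\infty_{\mathrm{sc}}$ satisfies the same inequality, and hence is itself automatically a (strictly) convex integrand. Second, $C^\infty_{\mathrm{sc}}(S^n,\mathbb{R}_+)$ is \emph{dense} in $C^\infty_{\mathrm{conv}}(S^n,\mathbb{R}_+)$ by the one-line device $\gamma_\varepsilon:=\gamma+\varepsilon$: since constants have vanishing spherical Hessian,
\[
\nabla^2\gamma_\varepsilon+\gamma_\varepsilon\,g=(\nabla^2\gamma+\gamma\,g)+\varepsilon\,g\geq\varepsilon\,g>0
\]
for every $\varepsilon>0$, so $\gamma_\varepsilon\in C^\infty_{\mathrm{sc}}$, and plainly $\gamma_\varepsilon\to\gamma$ in the Whitney $C^\infty$ topology as $\varepsilon\to 0^+$.

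With these two facts in hand, the theorem reduces to topological bookkeeping: by Proposition~\ref{proposition 1}(2), the set $S^\infty(S^n,\mathbb{R}_+)$ is dense in $C^\infty(S^n,\mathbb{R}_+)$, so its intersection with the open subset $C^\infty_{\mathrm{sc}}(S^n,\mathbb{R}_+)$ is dense in $C^\infty_{\mathrm{sc}}(S^n,\mathbb{R}_+)$; combining this with the density of $C^\infty_{\mathrm{sc}}$ in $C^\infty_{\mathrm{conv}}$ and the transitivity of density, one concludes that $S^\infty(S^n,\mathbb{R}_+)\cap C^\infty_{\mathrm{conv}}(S^n,\mathbb{R}_+)$ is dense in $C^\infty_{\mathrm{conv}}(S^n,\mathbb{R}_+)$. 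The step I expect to require the most care is the very first one, namely translating the inversion-based definition of convex integrand from the introduction into the analytic condition $\nabla^2\gamma+\gamma\,g\geq 0$; once that equivalence is pinned down, the perturbation $\gamma\mapsto\gamma+\varepsilon$ carries all the geometric content and the Morse-theoretic input is imported wholesale from Proposition~\ref{proposition 1}.
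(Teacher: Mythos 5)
Your proposal is correct in outline, but it takes a genuinely different route from the paper. The paper never invokes the support-function characterization $\nabla^2\gamma+\gamma\,g\succeq 0$; instead it perturbs $\gamma$ by \emph{translating the dual body}: writing $\phi(\theta)=\left(\theta,\tfrac{1}{\gamma(-\theta)}\right)$ and $W=\mbox{\rm conv}(\phi(S^n))$, it considers for $v\in\mbox{\rm int}(W)$ the convex integrand $\gamma_v$ whose inverted graph is $\partial W-v$, observes that $\widetilde{\gamma}_v\circ h_v(\theta)=\|\phi(\theta)-v\|$ for a suitable diffeomorphism $h_v$ of $S^n$, and hence that $\gamma_v$ is stable exactly when $v$ avoids $Caust(\phi)\cup Sym(\phi)$; the real work is Proposition \ref{symmetry set}, showing the symmetry set has Lebesgue measure zero, so that stable translates $\gamma_{v_n}\to\gamma$ exist. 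Your route instead perturbs by $\gamma\mapsto\gamma+\varepsilon$ (an outer parallel body on the support-function side) to land in the set of \emph{strictly} convex integrands, which is open in all of $C^\infty(S^n,\mathbb{R}_+)$, and then imports Mather's density wholesale. Each approach has its advantages: yours is shorter and avoids Propositions \ref{caustic}--\ref{symmetry set} entirely, and as a by-product shows that strictly convex integrands form an open dense subset; the paper's stays entirely within the inversion-based definition of convex integrand given in the introduction (it never needs to prove the equivalence with the differential inequality), and its caustic/symmetry-set machinery is reused verbatim to prove Proposition \ref{proposition 4} in Section \ref{section 4}, which your argument would not supply.

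The one point you must actually nail down is the step you yourself flag: the equivalence between the paper's definition (that ${\rm inv}({\rm graph}(\gamma))$ coincides with the boundary of its convex hull) and positive semi-definiteness of $\nabla^2\gamma+\gamma\,g$. This is true and standard --- the radial function $\rho(\omega)=1/\gamma(-\omega)$ bounds a convex body if and only if $1/\rho$ is the restriction to $S^n$ of a support function, and for $C^2$ functions the $1$-homogeneous extension is convex precisely when $\nabla^2\gamma+\gamma\,g\succeq 0$, using that the Euclidean Hessian of a $1$-homogeneous function annihilates the radial direction --- but it is not a one-line computation and deserves a full proof or a precise citation (e.g., to the convex-geometry literature on polar duality of radial and support functions) before the rest of your argument can be considered complete. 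With that equivalence in place, your openness, density, and transitivity steps are all sound.
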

\noindent 
Similarly as the assertion (2) of Proposition \ref{proposition 1}, 
Theorem \ref{theorem 1} asserts that any $C^\infty$ convex integrand  
$\gamma: S^n\to \mathbb{R}_+$ can be perturbed to a stable convex integrand $\widetilde{\gamma}$ 
by a sufficiently small perturbation;  
and for any sufficiently small $\varepsilon >0$, any continuous mapping 
$\Phi: (-\varepsilon, \varepsilon)
\to C^\infty_{\rm conv}(S^n, \mathbb{R}_+)$ 
such that $\Phi(0)=\widetilde{\gamma}$ 
and any two $t_1, t_2\in (-\varepsilon, \varepsilon)$, 
there exist $C^\infty$ diffeomorphisms 
$h: S^n\to S^n$ and $H: \mathbb{R}_+\to \mathbb{R}_+$ such that 
the equality $\Phi(t_2)=H\circ \Phi(t_1)\circ h^{-1}$ holds.   
\par 
\bigskip 
In Section \ref{section 2}, preliminaries for the proof of 
Theorem \ref{theorem 1} are given.     
Theorem \ref{theorem 1} is proved in Section \ref{section 3}.  
In Section \ref{section 4}, an application of the proof of 
Theorem \ref{theorem 1} is given.   
\section{Preliminaries}\label{section 2}
Let $\phi:S^n\to\R^{n+1}$ be a $C^\infty$ embedding.   
Consider the family of functions $F:\R^{n+1}\times S^n\to \R$ defined by
\[
F(v,z)=\frac{1}{2}||\phi(z)-v||^2.
\] 
Notice that $F$ may be regarded as 
a mapping from $\R^{n+1}$ to $C^\infty(S^n,\R)$ 
which maps each $v\in\R^n$ to the function 
$f_v(z)=F(v,z)\in C^\infty(S^n,\R)=\{g:S^n\to \R\;\; C^\infty\}$. 
The set of values $v$ for which $f_v(z)$ has a degenerate critical point, 
denoted by $Caust(\phi)$, is called the \emph{Caustic} of $\phi$  
(for details on caustics,  for instance see 
\cite{arnold, arnold-guseinzade-varchenko, izumiya1, izumiya-takahashi}). 
The set of values $v$ for which $f_v(z)$ 
has a multiple critical value forms the \emph{Symmetry set} of $\phi$, 
denoted by $Sym(\phi)$ 
(for details on symmetry sets, see for instance \cite{bruce-giblin2, 
bruce-giblin, bruce-giblin-gibson}). 
By the assertion (1) of Proposition \ref{proposition 1}, 
these two sets $Caust(\phi)$ and $Sym(\phi)$ 
constitute the set of points $v$ 
for which the function $f_v\in C^\infty(S^n,\R)$ is not stable.
\begin{proposition}\label{caustic}
Let $\phi:S^n\to\R^{n+1}$ be a $C^\infty$ embedding.  
Then, $Caust(\phi)$ has Lebesgue measure zero in $\mathbb{R}^{n+1}$.   
\end{proposition}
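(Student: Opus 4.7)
The plan is to identify $Caust(\phi)$ with the classical focal set of the embedded hypersurface $\phi(S^n) \subset \R^{n+1}$ and then conclude via Sard's theorem applied to the normal endpoint map.

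First I would analyze the critical points of $f_v$. Differentiating $F(v, z) = \frac{1}{2}\|\phi(z) - v\|^2$ shows that $z$ is critical for $f_v$ precisely when $v - \phi(z)$ is normal to $\phi(S^n)$ at $\phi(z)$, that is, when $v = \phi(z) + t\,\nu(z)$ for some $t \in \R$, where $\nu : S^n \to S^n$ is a globally defined smooth unit normal (it exists since $\phi(S^n)$ bounds a region in $\R^{n+1}$ by the Jordan--Brouwer theorem). A straightforward coordinate computation shows that at such a critical point the Hessian of $f_v$ is represented by the symmetric matrix $g - t\,h$, where $g$ and $h$ denote the first and second fundamental forms of $\phi$ at $z$. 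Hence the Hessian is degenerate if and only if $1/t$ is a principal curvature of $\phi$ at $z$, so $Caust(\phi)$ coincides with the focal set of $\phi(S^n)$.

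Next I would introduce the normal endpoint map $E : S^n \times \R \to \R^{n+1}$, $E(z, t) = \phi(z) + t\,\nu(z)$, a $C^\infty$ map between manifolds of equal dimension $n+1$. Using the Weingarten relation $d\nu_z = -d\phi_z \circ A_z$ for the shape operator $A_z$, one computes
\[
dE_{(z, t)}(X, s) = d\phi_z\bigl((I - t A_z) X\bigr) + s\,\nu(z),
\]
and since $\nu(z)$ is orthogonal to $d\phi_z(T_z S^n)$, the differential $dE_{(z, t)}$ fails to be an isomorphism if and only if $\det(I - t A_z) = 0$, which is exactly the principal-curvature condition obtained in the previous step. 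Thus $Caust(\phi)$ is precisely the set of critical values of $E$.

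Finally, I would invoke Sard's theorem: since $E$ is a $C^\infty$ map between manifolds of the same dimension $n+1$, its set of critical values has Lebesgue measure zero in $\R^{n+1}$, so $Caust(\phi)$ has Lebesgue measure zero, as required. I do not anticipate any serious obstacle beyond organizing the two parallel computations — one for the Hessian of $f_v$, one for $dE$ — and verifying that they yield the same degeneracy condition $\det(I - t A_z) = 0$; once that identification is in hand, the measure-zero conclusion is immediate from Sard.
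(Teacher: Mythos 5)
Your argument is correct and is essentially the proof the paper points to: the paper does not prove Proposition 2 itself but cites Milnor's \emph{Morse Theory}, \S 6, where exactly this identification of $Caust(\phi)$ with the focal set (via the Hessian computation $g - t\,h$) and the application of Sard's theorem to the normal endpoint map $E(z,t)=\phi(z)+t\,\nu(z)$ are carried out. Nothing further is needed.
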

\noindent 
For the proof of Proposition \ref{caustic}, see \cite{Morse theory}, \S 6 
\lq\lq Manifolds in Euclidean space\rq\rq.   
\begin{proposition}\label{symmetry set}
Let $\phi:S^n\to\R^{n+1}$ be a $C^\infty$ embedding.  
Then, $Sym(\phi)$ has Lebesgue measure zero in $\mathbb{R}^{n+1}$.
\end{proposition}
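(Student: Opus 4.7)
The plan is to present $Sym(\phi)$, outside a measure-zero subset contained in $Caust(\phi)$, as the image under a smooth projection of a smooth $n$-dimensional manifold, and then invoke the standard fact that smooth images of $n$-dimensional manifolds have Lebesgue measure zero in $\R^{n+1}$.

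First I would package the defining conditions of $Sym(\phi)$ into a single smooth map. Consider
\[
\Phi\colon \bigl\{(z_1,z_2)\in S^n\times S^n : z_1\ne z_2\bigr\}\times\R^{n+1} \;\longrightarrow\; T^*S^n\oplus T^*S^n\oplus\R
\]
defined by
\[
\Phi(z_1,z_2,v) \;=\; \bigl(d(f_v)_{z_1},\; d(f_v)_{z_2},\; f_v(z_1)-f_v(z_2)\bigr).
\]
By the very definition of $Sym(\phi)$, the symmetry set is the image of $\Phi^{-1}(0)$ under the projection $(z_1,z_2,v)\mapsto v$.

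Next I would split $Sym(\phi)=\bigl(Sym(\phi)\cap Caust(\phi)\bigr)\cup\bigl(Sym(\phi)\setminus Caust(\phi)\bigr)$. The first piece is contained in $Caust(\phi)$ and hence has measure zero by Proposition \ref{caustic}. For the second piece, at any $(z_1,z_2,v)\in\Phi^{-1}(0)$ with $v\notin Caust(\phi)$, both $z_1$ and $z_2$ are non-degenerate critical points of $f_v$. A direct block computation of the Jacobian of $\Phi$ at such a point --- the two Hessian blocks along the diagonal are invertible by non-degeneracy; the $z_i$-derivatives of the last scalar component vanish because $d(f_v)_{z_i}=0$; and the $v$-derivative of that scalar component equals $\phi(z_2)-\phi(z_1)\ne 0$ since $\phi$ is injective and $z_1\ne z_2$ --- yields a block lower-triangular Jacobian with surjective diagonal blocks, so $\Phi$ is a submersion there.

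By the implicit function theorem, the portion of $\Phi^{-1}(0)$ lying over $\R^{n+1}\setminus Caust(\phi)$ is a smooth manifold of dimension $(2n+(n+1))-(2n+1)=n$, and its image under the smooth projection to $\R^{n+1}$ is therefore a countable union of smooth images of $n$-dimensional charts, which has Lebesgue measure zero in $\R^{n+1}$. Combining both pieces yields the conclusion. I expect the submersivity check to be the main technical step; it works cleanly only because the critical-point conditions in the first two components conveniently force the $z_i$-derivatives of the last component to vanish, producing the block-triangular shape, while Proposition \ref{caustic} is precisely the tool that lets us exclude the degenerate locus from the start.
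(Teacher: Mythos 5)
Your argument is correct, but it follows a genuinely different route from the paper's. You work \emph{upstairs}: on $\{(z_1,z_2)\in S^n\times S^n: z_1\ne z_2\}\times\R^{n+1}$ you show the map recording the two differentials and the difference of values is a submersion along the relevant part of its zero locus (your block computation is right: the Hessian blocks are invertible off the caustic, the $z_i$-derivatives of the value-difference vanish precisely because the first two components force $z_1,z_2$ to be critical points, and the $v$-derivative of the value-difference is $\phi(z_2)-\phi(z_1)\ne 0$ by injectivity of $\phi$), so that piece of the zero locus is an $n$-manifold whose smooth projection to $\R^{n+1}$ is Lebesgue null. The paper instead works \emph{downstairs} in $\R^{n+1}$ via wave fronts $\phi_t(\theta)=\phi(\theta)+t\,{\bf n}(\theta)$: away from $Caust(\phi)$ the map $(t,\theta)\mapsto\phi_t(\theta)$ is locally invertible, yielding two branch functions $t_1,t_2$ near a point of $Sym(\phi)$, and $Sym(\phi)$ is locally the zero set of $T=t_1-t_2$, whose gradient $\nabla t_1-\nabla t_2$ is shown to be nonzero; hence $Sym(\phi)\setminus Caust(\phi)$ is locally an $n$-dimensional submanifold of $\R^{n+1}$. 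Both proofs dispose of $Sym(\phi)\cap Caust(\phi)$ identically via Proposition \ref{caustic}. Your version avoids wave fronts and the inversion of the normal exponential map, and its transversality check is purely mechanical; the trade-off is that you only exhibit the symmetry set as a smooth image of an $n$-manifold (which still suffices for measure zero), whereas the paper's construction gives the slightly finer conclusion that, off the caustic, $Sym(\phi)$ is locally a smooth hypersurface. The only point you should make explicit is that your target $T^*S^n\oplus T^*S^n\oplus\R$ is a bundle rather than a vector space, so the implicit function theorem is applied in local trivializations (equivalently, transversality to the zero section), and that second countability of the resulting $n$-manifold is what licenses the reduction to countably many charts.
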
 
\begin{proof} 
\quad 
Since $\phi$ is an embedding, 
the complement of $\phi(S^n)$ constitutes two connected component{\color{black}s}.   
Denote the bounded connected component by $V_\phi$.   
Set $M=\phi(S^n)$.  For each $\theta\in S^n$, consider the normal vector 
space $N_{\phi(\theta)}(M)$ to $M$ at $\theta$.   
Notice that $N_{\phi(\theta)}(M)$ is a $1$-dimensional vector space.   
Thus, we can uniquely specify the unit vector ${\bf n}(\theta)$ 
of $N_{\phi(\theta)}(M)$ so that $\phi(\theta)+\varepsilon{\bf n}(\theta)$ 
belongs to $V_\phi$ for any sufficiently small $\varepsilon>0$.    
For any $t\in \R$, 
let $\phi_t: S^n\to \mathbb{R}^{n+1}$ be the $C^\infty$ mapping defined by 
$\phi_t(\theta)=\phi(\theta)+t{\bf n}(\theta)$.      
The mapping $\phi_t$ is called a {\it wave front} of $\phi$ 
(for details on wave fronts, see for instance 
\cite{arnold, arnold-guseinzade-varchenko, izumiya1, izumiya-takahashi}).    
It is clear that, by using wave fronts $\{\phi_t\}_{t\in \R}$, 
the set $Sym(\phi)$ can be characterized as follows:   
\[
Sym(\phi)=
\bigcup_{t\in \R}
\left\{
\phi_t(\theta_1)=\phi_t(\theta_2)\; |\; 
\theta_1\ne \theta_2
\right\}.
\]
\par 
By Proposition \ref{caustic}, the intersection 
$Sym(\phi)\cap Caust(\phi)$ is of Lebesgue measure zero.   
Thus, in order to show Proposition \ref{symmetry set}, 
it is sufficient to show that $Sym(\phi)\cap (\R^{n+1}-Caust(\phi))$ 
is of Lebesgue measure zero.   
Take one point  
$\phi_{t_0}(\theta_1)=\phi_{t_0}(\theta_2)$ of 
$Sym(\phi)\cap (\R^{n+1}-Caust(\phi))$, where 
$\theta_1, \theta_2$ are two distinct point of $S^n$.        
Set $x_0=\phi_{t_0}(\theta_1)=\phi_{t_0}(\theta_2)$, and 
let $U_0$ be a sufficiently small open neighborhood of $x_0$.    
Notice that, since $Caust(\phi)$ is compact, 
$U_0$ may be chosen so that $U_0\cap Caust(\phi)=\emptyset$.     
\par 
Let $i$ be $1$ or $2$.    For the $i$, 
define the mapping 
$\left(t_i, \widetilde{\theta}_i\right): U_0\to \R\times S^n$ 
as follows:   
\[
x=\phi_{t_i(x)}(\widetilde{\theta}_i(x)), \quad 
\left(t_i(x_0)=t_0, \widetilde{\theta}_i(x_0)=\theta_i\right).  
\]  
Notice that, since $U_0\cap Caust(\phi)=\emptyset$, both of the following 
two 
are well-defined $C^\infty$ diffeomorphisms.
\begin{eqnarray*}
\left(t_1, \widetilde{\theta}_1\right) & : & 
U_0\to \left(t_1, \widetilde{\theta}_1\right)\left(U_0\right),  \\ 
\left(t_2, \widetilde{\theta}_2\right) & : & 
U_0\to \left(t_2, \widetilde{\theta}_2\right)\left(U_0\right). 
\end{eqnarray*}      
Set $T=t_1-t_2$.    Then, it is clear that 
$Sym(\phi)\cap U_0=T^{-1}(0)$.      
\par 
For any $i=1, 2$, 
let $\nabla t_i(x_0)$ be the gradient vector of $t_i$ at $x_0$.  
Since both $t_1, t_2$ are non-singular functions, 
it follows that neither $\nabla t_1(x_0)$ nor 
$\nabla t_2(x_0)$ 
is the zero vector.    Moreover, from the construction, 
it is easily seen that 
even when $\nabla t_1(x_0)$ and $\nabla t_2(x_0)$ are linearly dependent,   
$\nabla T(x_0)=\nabla t_1(x_0)-\nabla t_2(x_0)$ is a non-zero vector.    
Therefore, taking a smaller open neighborhood 
$\widetilde{U}_0$ of $x_0$ if necessary, it follows that 
$T^{-1}(0)\cap \widetilde{U}_0$ is an $n$-dimensional submanifold of 
$\R^{n+1}$.     
Therefore, Proposition \ref{symmetry set} follows.   
\end{proof}     
Propositions \ref{caustic}, \ref{symmetry set} clearly yield the following:   
\begin{corollary}\label{corollary 1}
Let $\phi:S^n\to\R^{n+1}$ be a $C^\infty$ embedding.  
Then, the union $Caust(\phi)\cup Sym(\phi)$ is a subset of 
Lebesgue measure zero in $\mathbb{R}^{n+1}$.
\end{corollary}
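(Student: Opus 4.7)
The plan is straightforward, since Corollary \ref{corollary 1} is essentially an immediate consequence of the two preceding propositions together with a basic property of Lebesgue measure. First I would invoke Proposition \ref{caustic} to conclude that $Caust(\phi)$ has Lebesgue measure zero in $\mathbb{R}^{n+1}$, and then Proposition \ref{symmetry set} to conclude the same for $Sym(\phi)$.

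Next I would appeal to the finite subadditivity of the Lebesgue (outer) measure: if $A, B \subset \mathbb{R}^{n+1}$ each have Lebesgue measure zero, then so does $A \cup B$, since $\mu(A\cup B) \le \mu(A)+\mu(B) = 0$. Applied to $A = Caust(\phi)$ and $B = Sym(\phi)$, this immediately gives that $Caust(\phi)\cup Sym(\phi)$ has Lebesgue measure zero, which is the desired conclusion.

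There is essentially no obstacle here; the entire content is packaged in Propositions \ref{caustic} and \ref{symmetry set}, and the corollary is only combining them through the elementary measure-theoretic fact above. The only thing one might take a moment to note is that no countable union is needed: a finite union of two null sets suffices, so one avoids any appeal beyond the most elementary property of Lebesgue measure.
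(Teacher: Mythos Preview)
Your proposal is correct and matches the paper's own treatment: the paper does not even write a separate proof, stating only that Propositions \ref{caustic} and \ref{symmetry set} ``clearly yield'' the corollary. Your explicit appeal to finite subadditivity of Lebesgue measure is exactly the one-line justification implicit in that remark.
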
 
\section{Proof of Theorem \ref{theorem 1}}
\label{section 3}
%
Let $\gamma:S^n\to\R_+$ be a $C^\infty$ convex integrand, and 
let $V$ be a neighborhood of $\gamma$ in 
$C^\infty_{\rm conv}(S^n, \R_+)$.      
It is sufficient to show that $V\cap S^\infty(S^n, \R_+)\ne \emptyset$.   
In order to construct an element of  
$V\cap S^\infty(S^n, \R_+)$, we consider the $C^\infty$ 
embedding  
$\phi: S^n\to \mathbb{R}^{n+1}-\{0\}$ defined as follows:    
\[
\phi(\theta)=\left(\theta, \frac{1}{\gamma(-\theta)}\right).
\] 
Let $W$ be the convex hull of $\phi(S^n)$.   
Then, since $\gamma$ is a convex integrand, 
it follows that 
\[
\phi(S^n)=\partial W,
\leqno{(*)}
\] 
where 
$\partial W$ stands for the boundary of $W$.     
\par 
Next, for any $v\in \mbox{\rm int}(W)$   
consider the parallel translation 
$T_v:\R^{n+1}\to\R^{n+1}$ 
defined by $T_v(x)=x-v$, where 
int$(W)$ means the set consisting of interior points of $W$.   
Moreover, for any $\theta\in S^n$ set $L_\theta=\{(\theta,r)\in\R^{n+1}-\{0\}| r\in\R_+\}$    
and for any $v\in \mbox{\rm int}(W)$ 
define $\tilde{\gamma}_v:S^n\to\R_+$ as follows. 
\[
\left(\theta, \tilde{\gamma}_v(\theta)\right)
=T_v\left(\partial W\right)\cap L_\theta.
\]
Notice that, by $(*)$ and $v\in \mbox{\rm int}(W)$, 
$\widetilde{\gamma}_v$ is a well-defined function.    
Notice also that 
graph$(\tilde{\gamma}_v)=T_v(\partial W)$.     
By $(*)$ and $v\in \mbox{\rm int}(W)$ again, it follows that 
$||\phi(\theta)-v||>0$ for 
any $\theta\in S^n$.     
Thus, it follows that the mapping 
$h_v:S^n\to S^n$ defined by
\[
h_v(\theta)=\dfrac{\phi(\theta)-v}{||\phi(\theta)-v||}
\]
is a $C^\infty$ diffeomorphism and the following holds: 
\[
\left(\tilde{\gamma}_v\circ h_v\right)(\theta)=||\phi(\theta)-v||.
\]
Let $H: \R_+\to \R_+$ be the $C^\infty$ diffeomorphism defined by 
$H(X)=\frac{1}{2}X^2$.   
Then, we have the following:   
\[
F(v,\theta)=\dfrac{1}{2}||\phi(\theta)-v||^2=
\left(H\circ\tilde{\gamma}_v\circ h_v\right)(\theta).
\]
Hence, we have     
\begin{eqnarray*}
Caust(\phi) & = & 
\left\{v:\exists\theta:\nabla(H\circ\tilde{\gamma}_v\circ h_v)(\theta)=0\,
\,\mbox{\rm and}\,\,\det(Hess(H\circ\tilde{\gamma}_v\circ h_v)(\theta))=0
\right\} \\ 
{ } & = & 
\left\{v:\exists\theta:\nabla(\tilde{\gamma}_v\circ h_v)(\theta)=0\,
\,\mbox{\rm and}\,\,\det(Hess(\tilde{\gamma}_v\circ h_v)(\theta))=0\right\}
\end{eqnarray*}
and
\begin{eqnarray*}
{ } & { } & Sym(\phi) \\ 
{ } & = & 
\left\{v:\exists\theta_1\ne\theta_2:
\nabla(H\circ\tilde{\gamma}_v\circ h_v)(\theta_1)
=\nabla(H\circ\tilde{\gamma}_v\circ h_v)(\theta_2)=0 \right.\\ 
{ } & { } & \qquad\qquad\qquad\qquad \mbox{\rm and}\,\,\left.(H\circ\tilde{\gamma}_v\circ h_v)
(\theta_1)=(H\circ\tilde{\gamma}_v\circ h_v)(\theta_2)\right\} \\ 
{ } & = &  
\left\{v:\exists\theta_1\ne\theta_2:
\nabla(\tilde{\gamma}_v\circ h_v)(\theta_1)
=\nabla(\tilde{\gamma}_v\circ h_v)(\theta_2)=0\,\,
\mbox{\rm and}\,\,(\tilde{\gamma}_v\circ h_v)
(\theta_1)=(\tilde{\gamma}_v\circ h_v)(\theta_2)\right\}.
\end{eqnarray*}
For any $r\in \mathbb{R}_+$, let 
$B(0, r)$ be the open disk with radius $r$ centered at $0$.   
Then, by Corollary \ref{corollary 1}, 
for any sufficiently small $\varepsilon>0$ 
there exists a point $v\in B(0, \varepsilon)$ such that 
$\widetilde{\gamma}_v\circ h_v$ is stable.  
This implies that there exists a sequence 
$\{v_n\in \mbox{\rm int}(W)\}_{n=1, 2, \ldots}$ 
converging to the origin such that 
$\widetilde{\gamma}_{v_n}$ is stable for any $n\in \mathbb{N}$.  
\par 
For any $v\in \mbox{\rm int}(W)$, 
define the convex integrand $\gamma_v: S^n\to \mathbb{R}_+$ as follows:  
\[
\gamma_v(\theta)=\frac{1}{\widetilde{\gamma_v}(-\theta)}.  
\]
Since  
$S^n$ is compact, the mapping 
$\Phi: B(0, \varepsilon)\to C^\infty_{conv}(S^n,\mathbb{R}_+)$ defined by 
$\Phi(v)=\gamma_v$ is continuous.  
Since $\gamma_0=\gamma$, it follows that if $n$ is sufficiently large, 
then the convex integrand 
$\gamma_{v_n}$ must be inside the given neighborhood $V$ of $\gamma$.    
\hfill $\Box$

\section{Application of the proof of Theorem \ref{theorem 1}}
\label{section 4}
Theorem \ref{theorem 1} guarantees that 
$C^\infty_{{\rm conv}}(S^n, \R_+)$ has sufficiently many stable functions.   As in \cite{Morse theory}, 
$C^\infty$ convex integrands $\gamma$ having only non-degenerate 
critical points can be a useful tool to investigate 
inv(graph$(\gamma)$).     
For instance, we have the following:  
\begin{proposition}\label{proposition 4}
Let $n$ be an integer satisfying $n\ge 2$.  
Let $\gamma:S^n\to\R_+$ be a $C^\infty$ convex integrand such that 
any critical point is non-degenerate and 
the index of $\gamma$ at any critical point is zero or $n$. 
Then, $\gamma$ must be stable.    
\end{proposition}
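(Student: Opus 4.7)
The plan is to reduce stability to the criterion of Proposition~\ref{proposition 1}(1). Since non-degeneracy of every critical point is already assumed, it remains only to verify that the critical values of $\gamma$ are pairwise distinct. The strategy is to show that the restriction on indices, combined with $n \geq 2$, forces $\gamma$ to have exactly one local minimum and exactly one local maximum; distinctness of the two critical values then follows for free.

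First, I would invoke the classical theorem of Morse theory (see, e.g., \cite{Morse theory}) which asserts that $S^n$ has the homotopy type of a CW complex with exactly one cell of dimension $\lambda$ for each critical point of $\gamma$ of index $\lambda$. Writing $c_i$ for the number of critical points of $\gamma$ of index $i$, the hypothesis on indices forces this CW model to consist of $c_0$ zero-cells together with $c_n$ $n$-cells, and no cells in intermediate dimensions.

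Next, I would compute cellular homology on this model. Because $n \geq 2$, the cellular chain complex exhibits a gap between dimension $0$ and dimension $n$, so every cellular boundary operator vanishes trivially. Consequently $H_0 \cong \Z^{c_0}$ and $H_n \cong \Z^{c_n}$, while $H_i = 0$ for $0 < i < n$. Comparison with $H_\ast(S^n)$ forces $c_0 = c_n = 1$, which identifies the unique minimum and unique maximum of $\gamma$.

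Finally, the minimum value of $\gamma$ is strictly less than its maximum value, since otherwise $\gamma$ would be constant and could not possess any non-degenerate critical point. Hence the two critical values of $\gamma$ are distinct, and Proposition~\ref{proposition 1}(1) delivers the stability of $\gamma$. The only real step is the cellular-homology computation; I do not expect a genuine obstacle, and I note in passing that the convex-integrand hypothesis is not used in the argument sketched here, although it is the natural context for the proposition.
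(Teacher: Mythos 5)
Your proof is correct, and it reaches the conclusion by a more direct route than the paper. The first half --- showing $c_0=c_n=1$, hence exactly two critical points --- is precisely the paper's Lemma~\ref{index lemma}; you obtain it from the cellular chain complex of the Morse CW structure, the paper from the Morse inequalities, and these are equivalent computations that use $n\ge 2$ in the same essential way (for $n=1$ the degree-$1$ boundary map need not vanish, which is why the statement fails there). Where you genuinely diverge is the deduction of stability: you apply Proposition~\ref{proposition 1}(1) directly to $\gamma$, noting that the two critical values are the global minimum and maximum, which coincide only if $\gamma$ is constant, and constancy is excluded by non-degeneracy. The paper instead passes to the embedding $\phi(\theta)=\left(\theta,\frac{1}{\gamma(-\theta)}\right)$ and proves the inclusion $Sym(\phi)\cap\mathrm{int}(W)\subset Caust(\phi)\cap\mathrm{int}(W)$, running the same ``two critical points with equal values force constancy'' contradiction for an arbitrary $v$ in the difference. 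Your version is more elementary and self-contained: it avoids the caustic/symmetry-set machinery entirely and in particular does not require knowing that the index restriction is inherited by the translated radial functions $\widetilde{\gamma}_v\circ h_v$ for $v\ne 0$, a point the paper's appeal to Lemma~\ref{index lemma} leaves implicit. What the paper's route buys is the stronger geometric statement that the entire family $\{\gamma_v\}$ is stable away from the caustic, tying the application back to the construction of Section~\ref{section 3}. Your closing observation that the convex-integrand hypothesis is never used is also accurate: the argument proves stability of any Morse function on $S^n$, $n\ge 2$, all of whose indices are $0$ or $n$.
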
 
\noindent 
Notice that Proposition \ref{proposition 4} does not hold in the case $n=1$.
\begin{proof}
The following lemma is needed.   
\begin{lemma}\label{index lemma}
Let $n$ be an integer satisfying $n\ge 2$.    
Let $\gamma:S^n\to\R_+$ be a convex integrand having only 
non-degenerate critical points 
such that the index of $\gamma$ at any critical point is zero or $n$. 
Then, $\gamma$ has only two critical points, the index is zero at one point  and it is $n$ at another point.
\end{lemma}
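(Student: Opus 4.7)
My plan is to prove this by direct Morse-theoretic bookkeeping. Since $\gamma$ is $C^\infty$ on compact $S^n$ with only non-degenerate critical points, it is a Morse function. Writing $c_i$ for the number of critical points of index $i$, the hypothesis forces $c_i = 0$ for $1 \le i \le n-1$, so only $c_0$ and $c_n$ remain to be pinned down. For $n \ge 2$ the Betti numbers of $S^n$ are $b_0 = b_n = 1$ and $b_i = 0$ otherwise, so the standard Morse apparatus should suffice.

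First I would invoke the strong Morse inequality at level $k = 1$, namely
\[
c_1 - c_0 \ge b_1 - b_0 = -1.
\]
Since $c_1 = 0$, this forces $c_0 \le 1$; combined with the weak Morse bound $c_0 \ge b_0 = 1$, it yields $c_0 = 1$. Next I would feed $c_0 = 1$ and $c_1 = \cdots = c_{n-1} = 0$ into the Morse equality
\[
\sum_{i=0}^n (-1)^i c_i = \chi(S^n) = 1 + (-1)^n,
\]
which collapses to $1 + (-1)^n c_n = 1 + (-1)^n$, forcing $c_n = 1$. Both critical points and their indices are then as claimed.

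I expect no serious obstacle, since the argument is pure bookkeeping with standard Morse inequalities. The role of $n \ge 2$ is precisely to guarantee $b_1(S^n) = 0$, which is what lets the strong Morse inequality at $k = 1$ bite; for $n = 1$ one has $b_1(S^1) = 1$ and the argument breaks down, consistent with the paper's remark that Proposition \ref{proposition 4} fails in that case. It is worth noting in passing that the convex integrand hypothesis plays no role in this lemma: the conclusion holds for any Morse function on $S^n$, $n \ge 2$, whose only critical indices are $0$ and $n$.
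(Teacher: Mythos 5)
Your proof is correct and follows essentially the same route as the paper: both use the strong Morse inequality at level $1$ (which requires $n\ge 2$ so that $b_1=0$) to pin down $c_0=1$, and then the Euler-characteristic identity $\sum(-1)^i c_i=\chi(S^n)$ to force $c_n=1$. Your closing observation that the convex-integrand hypothesis is not actually used in the lemma is also accurate.
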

\noindent 
Lemma \ref{index lemma} can be proved by using the Morse inequalities 
as follows.   
For any non-negative integer $\lambda$, 
denote by $C_\lambda$ the number of critical points of index $\lambda$ and by $R_\lambda$ the $\lambda$-th Betti number of $S^n$.
Set 
\[
S_\lambda=R_\lambda-R_{\lambda-1}+\cdots \pm R_0.
\]
Then, the following inequalities, called the {\it Morse inequalities}, hold 
(\cite{Morse theory}):  
\[
S_\lambda\le C_\lambda-C_{\lambda-1}+\cdots \pm C_0.
\leqno{(i_\lambda)}
\]
The inequality $(i_0)$ implies $1\le C_0$.   
Since $n\ge 2$, the inequality $(i_1)$ implies $0-1\le 0-C_0$, 
which is equivalent to 
$C_0\le 1$.     Thus, we have $C_0=1$.    
In the case $\lambda>n$, the inequalities $(i_\lambda), (i_{\lambda+1})$ imply 
the following:   
\[
\sum_{\lambda=0}^n(-1)^\lambda R_\lambda=
\sum_{\lambda=0}^n(-1)^\lambda C_\lambda.    
\]
By using this equality, it is easily seen that $C_n=1$; and thus the proof of  Lemma \ref{index lemma} completes.      
\par 
\medskip 
Now we prove Proposition \ref{proposition 4}.    
It is sufficient to show the following inclusion:   
\[
Sym(\phi)\cap \mbox{\rm int}(W) \subset 
Caust(\phi)\cap \mbox{\rm int}(W).
\]
Here, $\phi: S^n\to \R^{n+1}$ is the $C^\infty$ embedding defined by 
$\phi(\theta)=\left(\theta, \frac{1}{\gamma(-\theta)}\right)$, 
$Sym(\phi)$ is the symmetry set of $\phi$, $Caust(\phi)$ is the caustic of 
$\phi$ and int$(W)$ is the set 
consisting of interior points of the convex hull of inv(graph($\gamma$)).     
Suppose that the following set is non-empty.   
\[
Sym(\phi)\cap \mbox{\rm int}(W) - Caust(\phi)\cap \mbox{\rm int}(W).
\]
Then, take one element $v$ of this set.    
For the $v$, as in Section \ref{section 3}, we construct 
a $C^\infty$ convex integrand $H\circ \gamma_v\circ h_v$.   
Since $v$ is outside $Caust(\phi)\cap \mbox{\rm int}(W)$, 
it follows that all critical point of  $H\circ \gamma_v\circ h_v$ are 
non-degenerate.    
By Lemma \ref{index lemma}, 
the function $H\circ \gamma_v\circ h_v$ 
has only two critical points,  the index is zero at one point 
and it is $n$ at another point.
Let $\theta_1$ (resp., $\theta_2$) be the critical point with index zero 
(resp., index $n$).   
Then, it follows that $H\circ \gamma_v\circ h_v(\theta_1)$ is the minimal value and $H\circ \gamma_v\circ h_v(\theta_2)$ is the maximal value. 
On the other hand, 
since $v$ is inside $Sym(\phi)\cap \mbox{\rm int}(W)$, 
there exist two distinct critical points $\widetilde{\theta}_1, 
\widetilde{\theta}_2\in S^n$ 
such that 
$H\circ \gamma_v\circ h_v(\widetilde{\theta}_1)
=H\circ \gamma_v\circ h_v(\widetilde{\theta}_2)$.            
Since there are no critical points of $H\circ \gamma_v\circ h_v$ except for 
$\theta_1, \theta_2$, it follows that 
$\{\theta_1, \theta_2\}=\{\widetilde{\theta}_1, \widetilde{\theta}_2\}$.    
Therefore, $H\circ \gamma_v\circ h_v$ must be a constant function, and 
this implies that $\phi(S^n)$ is a sphere centered at $v$.  
Hence, it follows that $Caust(\phi)=Sym(\phi)=\{v\}$, which contradicts 
the assumption that $v\not\in Caust(\phi)$.    
Therefore, the following inclusion holds:  
\[
Sym(\phi)\cap \mbox{\rm int}(W) \subset 
Caust(\phi)\cap \mbox{\rm int}(W).
\]    
\end{proof}    

\end{document}